\newtheorem{definition}{Definition}[section]
\newtheorem{notation}[definition]{Notation}
\newtheorem{rema}[definition]{Remark}
\newtheorem{exa}[definition]{Example}
\newtheorem{obs}[definition]{Observation}
\newtheorem{lemma}[definition]{Lemma}
\newtheorem{proposition}[definition]{Proposition}
\newtheorem{theorem}[definition]{Theorem}
\def\ov#1{{\overline{#1}}}
\newenvironment{remark}%
{\begin{rema}\rm}%
{\end{rema} }
\newenvironment{example}%
{\begin{exa}\rm}%
{\end{exa} }
\newenvironment{ac}{\noindent{\bf Acknowledgements:}}{}
\newcommand{\Sres}{{\operatorname{Sres}}}
\newcommand{\Id}{{\operatorname{Id}}}
\newcommand{\bfy}{{\boldsymbol{y}}}
\def\U{{\mathcal U}}
\def\N{{\mathbb N}}
\def\Z{{\mathbb Z}}
\begin{document}

\title[Subresultants and Cauchy interpolation]{Subresultants, Sylvester sums and the rational interpolation problem}

\author{Carlos D' Andrea}
\address{Universitat de Barcelona, Departament d'{\`A}lgebra i Geometria.
Gran Via 585, 08007 Barcelona, Spain.}
\email{cdandrea@ub.edu}
\urladdr{http://atlas.mat.ub.es/personals/dandrea}

\author{Teresa Krick}
\address{Departamento de Matem\'atica, Facultad de
Ciencias Exactas y Naturales, Universidad de Buenos Aires and IMAS,
CONICET, Argentina} \email{krick@dm.uba.ar}
\urladdr{http://mate.dm.uba.ar/\~\,krick}

\author{Agnes Szanto}
\address{Department of Mathematics, North Carolina State
University, Raleigh, NC 27695 USA}
\email{aszanto@ncsu.edu}
\urladdr{www4.ncsu.edu/\~\,aszanto }

\begin{abstract}
We present a solution for the classical univariate rational
interpolation problem by means of (univariate) subresultants. In the
case of Cauchy interpolation (interpolation without multiplicities),
we give explicit formulas for the solution in terms of symmetric
functions of the input data,  generalizing the well-known formulas
for Lagrange interpolation. In the case of the osculatory rational
interpolation  (interpolation with multiplicities),  we give
determinantal expressions in terms of the input data, making
explicit some matrix formulations that can  independently be derived
from  previous results by Beckermann and Labahn.
\end{abstract}

\date{\today}
\thanks{Carlos D'Andrea is partially supported
by  the Research Project MTM2007--67493, Teresa Krick is partially
suported by ANPCyT PICT 2010, CONICET PIP 2010-2012 and UBACyT
2011-2014, and Agnes Szanto was partially supported by NSF grants
CCR-0347506 and CCF-1217557.}\maketitle

\maketitle

\section{Introduction}
The {\em Cauchy interpolation problem} or {rational interpolation
problem}, considered already in \cite{cau41, ros45,pred53}, is the
following:   \begin{quote} Let $K$ be a field, $a,b\in \Z_{\geq0},$
and set $\ell=a+b$. Given  a set $\{x_0,\dots,x_\ell\}$ of $\ell +
1$ distinct points in $K$,
 and $y_0,\dots,y_\ell \in K$, determine --if possible-- polynomials  $A
,\,B \in K[x]$ such that $\deg(A)\leq a,\,\deg(B)\leq b$ and
\begin{equation}\label{uu}
\frac{A}{B}\,(x_i)=y_i,\ 0\le i\le \ell.
\end{equation}
\end{quote}

This might be considered as a generalization of the classical
Lagrange interpolation problem for polynomials, where $b=0$ and
$a=\ell$. In contrast with that case, there is not always a solution
to this problem, since for instance by setting $y_0=\dots=y_a=0$,
the numerator $A$ is forced to be identically zero, and therefore
the remaining $y_{a+k}$,  $1\leq k\leq\ell-a$, have to be zero as
well. However, when there is a solution,  then the rational function
$A/B$ is unique as shown below.
\\

The obvious generalization of the Cauchy interpolation problem receives the name
 {\em osculatory rational interpolation problem} or rational Hermite interpolation problem:

\begin{quote}  Let $K$ be a field, $a,b\in \Z_{\geq0},$ and set
$\ell=a+b$. Given  a set $\{x_0,\dots,x_k\}$ of $k + 1$ distinct
points in $K$, $a_0,\ldots, a_k \in \Z_{\geq0}$ such that $a_0+\dots
+ a_k=\ell+1,$ and $y_{i,j}\in K$, $0\leq i\leq k$ , $0\leq j <
a_i$, determine --if possible-- polynomials  $A ,\,B \in K[x]$ such
that $\deg(A)\leq a,\,\deg(B)\leq b$ and
\begin{equation}\label{uv}
\left(\frac{A}{B}\right)^{(j)}(x_i)=j!\,y_{i,j},\ 0\le i\le k,\ 0\le
j<a_i.
\end{equation}
\end{quote}

 This problem has also been extensively studied from both an
algorithmic and theoretical point of view,
 see for instance  \cite{sal62,kah69,wuy75,BL00,TF00} and the references therein.
  A unified framework,  which relates the rational interpolation problem with the Euclidean algorithm, is presented in \cite{Ant88}, 
  and also  in the book \cite[Section 5.7]{vzGJ03}, where it is called called {\em rational function
  reconstruction}. 
In Theorem \ref{t3} below, we translate these results to
  the subresultants context, which enables us to obtain  some explicit
   expressions in terms of the input data for both problems.

For the Cauchy interpolation problem, there exists an explicit
closed formula in terms of the input data that can be derived from
the results on symmetric operators in a suitable ring of polynomials
presented  in \cite{las}, as shown in \cite{Las}.  Theorem
\ref{mainroots} recovers this expression from the relationship
between  subresultants and  the Sylvester sums introduced by
Sylvester in \cite{sylv}, see also \cite{LP03,DHKS07, DHKS09,
RS11,KS12}.

We also present in Theorem \ref{determinantal} an explicit
determinantal expression for the solution of the osculatory rational
 interpolation problem  in terms of the input data, giving it as  a
quotient  of determinants of generalized Vandermonde-type (and
Wronskian-type) matrices. This generalizes straight-forwardly the
corresponding known determinantal expression for the classical
Hermite interpolation problem, setting another unified framework for
all these interpolation problems. As mentioned in Remark \ref{LB}
below, this determinantal expression can actually also be derived
following the work of  Beckermann and Labahn in \cite{BL00}, as we
concluded from a recent useful discussion with George Labahn.

Since no  closed formula for subresultants in terms of roots with multiplicities  is known yet --except for very few exceptions,
see \cite{DKS13}-- a generalization
 of Theorem \ref{mainroots} to the osculatory rational interpolation problem is  still missing, and some more work
 on the subject must be done in order to shed light to the  problem.

\section{Subresultants and the rational interpolation problem}

Let us start by showing that a solution $A/B$  for the rational
interpolation problem, when it exists, is unique.

\begin{proposition}\label{canonical} If the osculatory rational interpolation problem \eqref{uv} has a solution, then  there exists  a unique pair $(A,B)$ with  $\gcd(A,B)=1 $ and $A$
monic
such that $A/B$ is a
 solution.

\end{proposition}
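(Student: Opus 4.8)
The plan is to first establish uniqueness of the *rational function* $A/B$ among all solutions, and then pass to the normalized representative. Suppose $(A_1, B_1)$ and $(A_2, B_2)$ are two solutions of \eqref{uv}. I would consider the polynomial $H := A_1 B_2 - A_2 B_1$, which has degree at most $a + b = \ell$. For each $i$, the condition that $A_1/B_1$ and $A_2/B_2$ agree to order $a_i$ at $x_i$ (together with the fact that $B_1, B_2$ should not vanish at $x_i$ — this needs a small argument, see below) forces $(x - x_i)^{a_i}$ to divide $H$. Since $\sum a_i = \ell + 1 > \ell \geq \deg H$, we conclude $H = 0$, hence $A_1/B_1 = A_2/B_2$ as rational functions.

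**Handling the subtlety about vanishing denominators.** The delicate point is that \eqref{uv} is stated in terms of derivatives of the rational function $A/B$, which a priori requires $B(x_i) \neq 0$. I would argue that if $(A,B)$ is any solution, then after cancelling $\gcd(A,B)$ one gets a reduced pair $(\tilde A, \tilde B)$ with $\tilde A/\tilde B = A/B$ as rational functions and the osculation conditions still hold; and for a reduced pair, $\tilde B(x_i) \neq 0$ for all $i$ — otherwise $\tilde B$ and hence the left-hand side of \eqref{uv} would have a pole at $x_i$, contradicting that it equals the finite value $j!\,y_{i,j}$ at $x_i$ for $j = 0$. So without loss of generality all solutions considered have non-vanishing denominators at the interpolation nodes, which legitimizes the computation with $H$ above and the order-of-vanishing bookkeeping via the Leibniz rule: $(x-x_i)^{a_i} \mid H$ because $A_1 B_2$ and $A_2 B_1$ have the same Taylor expansion up to order $a_i - 1$ at $x_i$.

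**From the rational function to the canonical pair.** Once $A_1/B_1 = A_2/B_2$ for all solutions, let $A/B$ be this common value and write it in lowest terms with numerator made monic: this determines $A$ and $B$ uniquely (the reduced form of a rational function is unique up to a common scalar, and requiring $A$ monic pins down the scalar; note $A \neq 0$ is needed — if $A = 0$ the normalization "$A$ monic" is vacuous, so I would either assume the problem is nondegenerate or remark that when the common solution is $0$ one takes $(A,B) = (0,1)$). It remains to check that this canonical $(A,B)$ is itself a solution, i.e. that $\deg A \le a$ and $\deg B \le b$: this is immediate since cancelling common factors only decreases degrees, and the osculation conditions are preserved because they depend only on the rational function $A/B$.

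**Main obstacle.** The only real obstacle is the bookkeeping around denominators vanishing at nodes and the precise meaning of \eqref{uv} in that case; once that is cleaned up (via the reduced-representative argument above), the degree count $\deg H \le \ell < \ell + 1 = \sum_i a_i$ does all the work. I would present this as a short self-contained argument rather than invoking the Euclidean-algorithm machinery of \cite{Ant88, vzGJ03}, since at this point in the paper the subresultant framework has not yet been set up.
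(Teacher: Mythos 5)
Your proof is correct and takes essentially the same route as the paper: both hinge on the observation that $H := A_1B_2 - A_2B_1$ has degree at most $\ell$ yet vanishes to order $a_i$ at each $x_i$, forcing $H = 0$ since $\sum_i a_i = \ell+1 > \ell$. Your writeup is somewhat more careful about the side issues the paper leaves implicit (nonvanishing of denominators at nodes, and the degenerate case $A = 0$), but the mathematical core is identical.
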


\begin{proof} If there is a solution, then, cleaning common factors and dividing by the leading coefficient of $A$, there is a  solution
 satisfying the same degree bounds with $\gcd(A,B)=1 $ and $A$ monic. Assume
$A_1/B_1$ and $A_2/B_2$ are both solutions of the same type. Then,
$(A_1/B_1)^{(j)}(x_i)=(A_2/B_2)^{(j)}(x_i)$ implies
$$\left(\frac{A_1B_2-A_2B_1}{B_1B_2}\right)^{(j)}(x_i)=0\mbox{  for } \ 0\le i\le k,\, 0\le
j<a_i,$$
 which inductively implies that
$(A_1B_2-A_2B_1)^{(j)}(x_i)=0$ for the $\ell+1$ conditions. But
$A_1B_2-A_2B_1$ is a polynomial of degree at most $\ell$, and
therefore $A_1B_2=A_2B_1$. Therefore, $A_1=cA_2$ and $B_1=cB_2$ with $c\in K\setminus\{0\}$. Both $A_1$ and $A_2$ are monic, so $c=1$ and the claim follows.
\end{proof}

Our results are
consequences of  interpreting  the rational interpolation
problem in terms  of conditions of subresultants of the following two
polynomials:
\begin{itemize}
 \item $f:=\prod_{j=0}^k(x-x_j)^{a_j}, $ which we write $f=\sum_{i=0}^{\ell+1} f_i
 x^i$. Note that  $f_{\ell+1}=1$.
 \item $g=\sum_{i=0}^{\ell} g_i x^i\in K[x]$, the {Hermite interpolation polynomial}
associated to the input data $(\overline X, Y)$ (where we assume
$g_i=0$ for $\deg(g)<i\le \ell$ in case $\deg(g)<\ell$).
\end{itemize}

 \smallskip We can assume in what follows  that at least one of the  $y_{i,j}$ is non-zero, as otherwise the solution of the rational interpolation problem is the $0$ function.

\smallskip For $d\le \ell$, consider the $d$-th subresultant polynomial
{\small $\Sres_d(f,g) $ of $f$ and $g$, defined as
\begin{equation}\label{defsub}
\Sres _d(f,g)
:=\det%
\begin{array}{|cccccc|c}
\multicolumn{6}{c}{\scriptstyle{2\ell+1-2d}}\\
\cline{1-6}
f_{\ell+1} & \cdots & & \cdots & f_{d+1-\left(\ell-d-1\right)}& x^{\ell-d-1}f(x)&\\
& \ddots & && \vdots  & \vdots &\scriptstyle{\ell-d}\\
&  &f_{\ell+1}&\cdots &f_{d+1}& x^0f(x)& \\
\cline{1-6}
g_{\ell} &\cdots & & \cdots & g_{d+1-(\ell-d)}&x^{\ell-d}g(x)&\\
&\ddots &&&\vdots & \vdots &\scriptstyle{\ell+1-d}\\
&& g_{\ell} &\cdots & g_{d+1} & x^0g(x)&\\
\cline{1-6} \multicolumn{2}{c}{}
\end{array}\ .
\end{equation}}
Note that the previous definition makes sense even if
$\deg(g)=m<\ell$, and agrees for $d\le m$ with the usual definition
of subresultant of $f$ and $g$ given by the matrix of the right size
$\ell+1+m-2d$, since $f$ is monic. For $m<d<\ell$ we have, according
to the definition above, that   $\Sres_d(f,g)=0$, and for $d=\ell$,
$\Sres_\ell(f,g)=g=\Sres_m(f,g)$.

\smallskip
 We have  the universal subresultant B\'ezout identity
  \begin{equation}\label{bezout}\Sres_d(f,g)=F_d\,f+G_d\,g,\end{equation}
where
{\small
\begin{equation}\label{Fd}
F_d
:=\det%
\begin{array}{|cccccc|c}
\multicolumn{6}{c}{\scriptstyle{2\ell+1-2d}}\\
\cline{1-6}
f_{\ell+1} & \cdots & & \cdots & f_{d+1-\left(\ell-d-1\right)}& x^{\ell-d-1} &\\
& \ddots & && \vdots  & \vdots &\scriptstyle{\ell-d}\\
&  &f_{\ell+1}&\cdots &f_{d+1}& x^0 & \\
\cline{1-6}
g_{\ell} &\cdots & & \cdots & g_{d+1-(\ell-d)}&0&\\
&\ddots &&&\vdots & \vdots &\scriptstyle{\ell+1-d}\\
&& g_{\ell} &\cdots & g_{d+1} &0&\\
\cline{1-6} \multicolumn{2}{c}{}
\end{array}\ ,
\end{equation}
and
\begin{equation}\label{Gd}
G_d
:=\det%
\begin{array}{|cccccc|c}
\multicolumn{6}{c}{\scriptstyle{2\ell+1-2d}}\\
\cline{1-6}
f_{\ell+1} & \cdots & & \cdots & f_{d+1-\left(\ell-d-1\right)}& 0&\\
& \ddots & && \vdots  & \vdots &\scriptstyle{\ell-d}\\
&  &f_{\ell+1}&\cdots &f_{d+1}& 0& \\
\cline{1-6}
g_{\ell} &\cdots & & \cdots & g_{d+1-(\ell-d)}&x^{\ell-d} &\\
&\ddots &&&\vdots & \vdots &\scriptstyle{\ell+1-d}\\
&& g_{\ell} &\cdots & g_{d+1} & x^0 &\\
\cline{1-6} \multicolumn{2}{c}{}
\end{array}\ .
\end{equation}}
Observe that $\deg(G_d)\le \ell-d,$ if $G_d\ne 0$.

\smallskip
The  result below expresses  the existence and uniqueness of the
solution of the osculatory rational interpolation problem in terms
of the subresultant sequence of $f$ and $g$.

\begin{theorem}\label{t3}
With notation as above, let $0\leq d\le a$ be the maximal index such
that $\Sres_d(f,g)\ne 0$. Then $\deg(G_d)\le b$ and the osculatory
rational interpolation problem \eqref{uv} has a solution  if and
only if $G_d(x_i)\ne 0$ for $1\le i\le k$. In that case the solution
is given by
\begin{equation*}\label{idmain}\frac{A}{B}=\frac{\Sres_d(f,g)}{G_d},\end{equation*}
where moreover $\gcd\big(\Sres_d(f,g),G_d\big)=1$.
\end{theorem}

This result is strongly related to  Theorem 5.16 from  \cite{vzGJ03}, which expresses  the existence and uniqueness of the
solution of the osculatory rational interpolation problem in terms
of the Extended Euclidean Scheme for  $f$ and $g$. We recall its statement below, as well as  Lemma  5.15 and a consequence of Lemma 3.15(v) of the same reference.

\begin{theorem} \label{vzg} \cite[Theorem 5.16, Lemmas 5.15 and 3.15(v)]{vzGJ03}
\\
With notation as above, let $r_i=s_i f + t_i g$, $i\ge 0$, be the successive remainders in  the Extended Euclidean
Scheme for $f$ and $g$, and $s_i,t_i$ the corresponding B\'ezout coefficients.
\begin{enumerate}
\item
The osculatory rational interpolation problem \eqref{uv} has a solution  $A/B$ if and only if the
minimal row $r_j=s_j f + t_j g$  such
that $ d_j:=\deg(r_j)\le a$  satisfies
$\gcd(r_j,t_j)=1$.
If this is the case, $A/B=r_j/t_j$ is the  solution
(and in particular $\deg(t_j)\le b$).
\item Let $r=sf+tg\ne 0$ be such that $\deg(r_j)\le \deg(r)< \deg(r_{j-1})$ and $\deg (r)+\deg (t) <\ell+1=\deg(f)$. Then there exists $c\in K$ such that $r=cr_j, s=cs_j, t=ct_j$. Moreover,
$\gcd(s,t)=1$.
\end{enumerate}
\end{theorem}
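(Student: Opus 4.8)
\medskip
\noindent\textbf{Sketch of a proof.} The plan is to argue via the classical theory underlying rational function reconstruction, organised around the $K[x]$-module $\mathcal{M}:=\{(u,v)\in K[x]^2:\ f\mid u-vg\}$. First I would record the elementary bookkeeping of the scheme. Put $d_i:=\deg r_i$, so the $d_i$ strictly decrease; from the recursions $r_{i+1}=r_{i-1}-q_ir_i$, $s_{i+1}=s_{i-1}-q_is_i$, $t_{i+1}=t_{i-1}-q_it_i$, the $2\times2$ matrix $M_i$ with rows $(s_i,t_i)$ and $(s_{i+1},t_{i+1})$ is obtained from $M_{i-1}$ by left multiplication by a matrix of determinant $-1$, with $M_0=\Id$; hence $\det M_i=(-1)^i$. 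This yields $s_it_{i+1}-s_{i+1}t_i=(-1)^i$ (so $\gcd(s_i,t_i)=1$), $t_{i+1}r_i-t_ir_{i+1}=(-1)^if$, and, by induction using $\deg q_i=d_{i-1}-d_i$, the identity $\deg t_i=\deg f-d_{i-1}$ for $i\ge1$ (so the $\deg t_i$ strictly increase). Moreover $\mathcal{M}$ is free of rank $2$ with basis $\{(f,0),(g,1)\}=\{(r_0,t_0),(r_1,t_1)\}$, in which $(r_i,t_i)$ has coordinate vector $(s_i,t_i)$, and since each transition matrix is unimodular, $\{(r_i,t_i),(r_{i+1},t_{i+1})\}$ is a $K[x]$-basis of $\mathcal{M}$ for every $i\ge0$.

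For part (2), given $r=sf+tg\neq0$ with $\deg r_j\le\deg r<\deg r_{j-1}$ and $\deg r+\deg t<\deg f$, one has $(r,t)\in\mathcal{M}$, hence $(r,t)=\alpha(r_j,t_j)+\beta(r_{j+1},t_{j+1})$ for unique $\alpha,\beta\in K[x]$. Pairing with $(r_j,t_j)$ through the determinant identity gives $\pm\beta f=r\,t_j-t\,r_j$; but $\deg(r\,t_j)<d_{j-1}+(\deg f-d_{j-1})=\deg f$ (using $\deg r<d_{j-1}$ and $\deg t_j=\deg f-d_{j-1}$) and $\deg(t\,r_j)=\deg t+d_j<\deg f$ (using $\deg r+\deg t<\deg f$ and $d_j\le\deg r$), so $\deg(\beta f)<\deg f$ forces $\beta=0$. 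Thus $r=\alpha r_j$ and $t=\alpha t_j$, and comparing $r=sf+tg$ with $r_j=s_jf+t_jg$ gives $s=\alpha s_j$. Finally, $\deg r=\deg\alpha+d_j$ and $\deg t=\deg\alpha+(\deg f-d_{j-1})$ together with the two degree hypotheses bound $\deg\alpha$, whence $\alpha=c\in K\setminus\{0\}$; then $\gcd(s,t)=c\,\gcd(s_j,t_j)=1$. (Lemmas 5.15 and 3.15(v) amount precisely to the proportionality of $(r,s,t)$ to $(r_j,s_j,t_j)$ under these hypotheses.)

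For part (1), I would translate the osculating conditions into a divisibility. Since $g^{(m)}(x_i)=m!\,y_{i,m}$ for all admissible $i,m$, and $B$ is a unit modulo $(x-x_i)^{a_i}$ whenever $B(x_i)\neq0$, for such $B$ the conditions $(A/B)^{(m)}(x_i)=m!\,y_{i,m}$, $0\le m<a_i$, are equivalent to $(x-x_i)^{a_i}\mid A-Bg$ (clear the unit $B$ in $K[x]/(x-x_i)^{a_i}$ and compare with $g$), hence by the Chinese Remainder Theorem (recall $f=\prod_j(x-x_j)^{a_j}$) to $f\mid A-Bg$. Let $j$ be minimal with $d_j\le a$ (if no such row exists then $\deg\gcd(f,g)>a$ and there is no solution, since any solution $A/B$ has $\gcd(f,g)\mid A$). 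Then $d_{j-1}\ge a+1$, so $\deg t_j=\deg f-d_{j-1}\le(\ell+1)-(a+1)=b$, and $\gcd(t_j,f)=\gcd(r_j,t_j)$ because $r_j=s_jf+t_jg$ with $\gcd(s_j,t_j)=1$. If $\gcd(r_j,t_j)=1$ then $t_j(x_i)\neq0$ for all $i$ and, using $f\mid r_j-t_jg$, the function $A/B:=r_j/t_j$ (rescaled so $A$ is monic) meets all the osculating conditions and the bounds $\deg A\le a$, $\deg B\le b$, hence is a solution. Conversely, if a solution exists, take the reduced monic pair $(A,B)$ of Proposition~\ref{canonical}; were $B(x_i)=0$ for some $i$, then $A(x_i)\neq0$ and $A/B$ would have a pole at $x_i$, which is impossible, so $\gcd(B,f)=1$ and $A=Sf+Bg$. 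As $\deg A\le a$, $\deg B\le b$ and $d_j\le a$, the degree estimates above make $\beta=0$ in the expansion $(A,B)=\alpha(r_j,t_j)+\beta(r_{j+1},t_{j+1})$ in $\mathcal{M}$, so $(A,S,B)=\alpha(r_j,s_j,t_j)$; then $1=\gcd(A,B)=\alpha\,\gcd(r_j,t_j)$ forces $\alpha\in K\setminus\{0\}$ and $\gcd(r_j,t_j)=1$, and $A/B=r_j/t_j$.

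I expect the only delicate point to be the final step of part (2): promoting ``$(r,s,t)$ is a $K[x]$-multiple of $(r_j,s_j,t_j)$'' to ``the multiplier lies in $K$'', where the two hypotheses must be combined in full with the identity $\deg t_i+d_{i-1}=\deg f$; should this leave a gap, it is exactly where a mild strengthening — e.g.\ additionally imposing $\gcd(r,t)=1$, which holds in the intended application — closes it. Everything else is routine once $\mathcal{M}$ and its Euclidean bases are set up, and part (1) follows from the dictionary between osculating conditions and divisibility by $f$ together with the coprimality argument above.
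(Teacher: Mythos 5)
The paper offers no proof of this statement: it is imported, with citations, from von zur Gathen--Gerhard, so there is no internal argument to compare against. Your proof is essentially the standard one underlying those citations (the module $\mathcal{M}$, the unimodular transition matrices, the identities $s_it_{i+1}-s_{i+1}t_i=(-1)^i$ and $\deg t_i=\deg f-d_{i-1}$, and the degree count that kills the coefficient $\beta$). Part (1) is complete and correct, including the reduction of the osculating conditions to $f\mid A-Bg$ and the identification $\gcd(r_j,t_j)=\gcd(f,t_j)$.

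The one genuine gap is exactly the one you flagged yourself: the last step of part (2), promoting $\alpha\in K[x]\setminus\{0\}$ to $\alpha\in K$. The two degree hypotheses only yield $2\deg\alpha<d_{j-1}-d_j$, which does not force $\deg\alpha=0$ once the degree drop exceeds $2$; in fact the statement as transcribed is false. Take $f=x^4-1$ and $g=x^3$ (a legitimate instance with $\ell=3$): then $r_2=-1$, $s_2=1$, $t_2=-x$, and $(r,s,t)=x\,(r_2,s_2,t_2)=(-x,\,x,\,-x^2)$ satisfies $\deg r_2=0\le \deg r=1<3=\deg r_1$ and $\deg r+\deg t=3<4=\deg f$, yet $r$ is not a constant multiple of $r_2$ and $\gcd(s,t)=x\neq 1$. (The cited Lemma 5.15 of von zur Gathen--Gerhard indeed concludes only that the multiplier is a nonzero polynomial.) This does not endanger the paper: in the sole application, in the proof of Theorem \ref{t3}, the Fundamental Theorem of Polynomial Remainder Sequences already gives $\deg\Sres_d(f,g)=\deg r_j$, which forces $\deg\alpha=0$ directly. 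So the fix is either to add the hypothesis $\deg r=\deg r_j$ (or that $\gcd(r,t)$ is constant, as you propose), or to weaken the conclusion to $c\in K[x]\setminus\{0\}$; your argument then closes without further change.
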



\begin{proof}[Proof of Theorem \ref{t3}.] We consider the minimal $j$ in the Extended Euclidean Scheme such that $ d_j:=\deg(r_j)\le a$: by Theorem \ref{vzg}\,(1), there is a solution $A/B=r_j/t_j$ to our problem if and only if $\gcd(r_j,t_j)=1$.
Observe that
for  $d_{j-1}:=\deg(r_{j-1})$  we have $a< d_{j-1}$, i.e. $d_j\le a<d_{j-1}$.

\noindent Let $d\le a$ be the largest such that $\Sres_d(f,g)\ne 0$. One has $\Sres_d(f,g)=F_df+G_dg$ with
$\deg(\Sres_d(f,g))+\deg(G_d)\le \ell<\ell+1= \deg(f)$. Moreover, by
the Fundamental Theorem of Polynomial Remainder Sequences, \cite{Coll67,BT71}
or \cite[Th.7.4]{GCL96}, $\Sres_{d_j}(f,g)$ and
$\Sres_{d_{j-1}-1}(f,g)$ are (non-zero) constant multiples of $r_j$
(and $\Sres_{d'}(f,g)=0$ for $d_j<d'<d_{j-1}-1$). This implies that $d_j\le d< d_{j-1}$. Therefore, applying Theorem \ref{vzg}\,(2), there exists $c\in K^\times$ such that
$$\Sres_d(f,g)=cr_j, \ F_d=c\,s_j \ \mbox{and} \ G_d=c\,t_j$$ with $\gcd(F_d,G_d)=1$. This implies, by the definition of $f$,
$$\gcd\big(\Sres_d(f,g),G_d\big)=1\ \Longleftrightarrow \
G_d(x_i)\ne 0 \ \mbox{ for } 0\le i\le k.$$
This concludes the proof.
\end{proof}

\begin{remark}
In the statement  of Theorem \ref{t3}, one can replace the hypothesis
{\em ``let $0\leq d\le a$ be the maximal index such that
$\Sres_d(f,g)\neq0$''} by {\em ``let $a\leq d\le \ell$ be the minimal
index such that $\Sres_d(f,g)\neq0$''}. This is due to the Fundamental Theorem of Polynomial Remainder Sequences mentioned in the previous proof, since 
if $\Sres_a(f,g)=0$, then one has that for $\Sres_k(f,g)$ and $\Sres_j(f,g)$ coincide up to a non-zero constant, for the
the maximal $k<a$ such that $\Sres_k(f,g)\ne 0$ and the minimal
$j>a$ such that $\Sres_j(f,g)\neq0$. 
Accordingly, one can replace the corresponding hypothesis in  Theorems \ref{mainroots} and
\ref{determinantal} below.
\end{remark}

\smallskip
Theorem \ref{t3} has the advantage that it can be applied to produce
explicit formulae for the Cauchy and the osculatory rational interpolation problems in terms of
the input data, as we show in the next sections.

\section{The Cauchy interpolation problem formula}
We now  present the closed expression in terms of the data for the
Cauchy interpolation problem.
 For $U,V\subset K$, we set $R(U,V):=\prod_{u\in U, v\in V}(u-v).$

\begin{theorem}\label{mainroots}
Given $(a,b)$, $X:=\{x_0, \ldots, x_\ell\}$ and $y_0,\ldots, y_\ell$
as in Problem \eqref{uu}. Let $d$ be maximal such that $0\leq d\leq
a$ and
$$
A_0:= \sum_{X'\subset X,|X'|=d}R(x,X')\big(\prod_{x_j\notin X'}y_j\big)/R(
X\setminus X',X') \  \in K[x]
$$
is not identically zero. Set $$ B_0:=\sum_{X''\subset X,
|X''|=\ell-d} R(X'',x)\big(\prod_{x_j\in X''}y_j\big)/R(X'',
X\setminus X'')\ \in K[x].
$$ Then $\deg(B_0)\le b$ and  a solution $\dfrac{A}{B}$ for the
Cauchy interpolation problem \eqref{uu}   exists if and only if
$B_0(x_i)\neq 0$ for  $0\le i\le \ell$. In that case the solution is
given by $$\frac{A}{B}=\frac{A_0}{B_0}.$$
\end{theorem}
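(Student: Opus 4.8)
The plan is to derive Theorem \ref{mainroots} directly from Theorem \ref{t3} by identifying the subresultant polynomial $\Sres_d(f,g)$ and its B\'ezout cofactor $G_d$ with the explicit symmetric-function expressions $A_0$ and $B_0$. In the Cauchy setting we have $f=\prod_{j=0}^\ell(x-x_j)$ (all multiplicities one) and $g$ is the Lagrange interpolation polynomial with $g(x_j)=y_j$. The key classical input is the formula expressing subresultants of $f$ and $g$ as \emph{double Sylvester sums} over subsets of the roots of $f$; see \cite{sylv,LP03,DHKS07,DHKS09,RS11,KS12}. Concretely, since the roots $x_0,\dots,x_\ell$ of $f$ are known and $g$ takes the values $y_j$ at these roots, the $d$-th subresultant admits an expression as a sum over $d$-element subsets $X'\subset X$ of a product of a Schur-like polynomial $R(x,X')$, the corresponding product $\prod_{x_j\notin X'}y_j$ of values, and a Vandermonde-type denominator $R(X\setminus X',X')$. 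The first step is therefore to invoke (or re-derive from the cited Sylvester-sum identities) the equalities
\begin{equation*}
\Sres_d(f,g)=c\,A_0\quad\text{and}\quad G_d=c\,B_0
\end{equation*}
for a common nonzero constant $c\in K$, where the second identity comes from the analogous Sylvester-sum expression for the cofactor $G_d$ (which has degree $\le\ell-d$) in terms of $(\ell-d)$-element subsets $X''\subset X$.

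Granting those two identities, the theorem follows almost formally. First, the index $d$ in Theorem \ref{mainroots} is defined to be maximal with $0\le d\le a$ and $A_0\not\equiv 0$; by the proportionality $\Sres_d(f,g)=c\,A_0$ this is exactly the index $d$ of Theorem \ref{t3} (maximal with $\Sres_d(f,g)\ne 0$), so we may apply that theorem. It gives $\deg(G_d)\le b$, hence $\deg(B_0)\le b$; it gives that a solution exists iff $G_d(x_i)\ne 0$ for $1\le i\le k$, which here (all $a_j=1$, so $k=\ell$) reads $B_0(x_i)\ne 0$ for $0\le i\le\ell$, after noting that $B_0(x_0)\ne0$ automatically once $\gcd(A_0,B_0)=1$ and $A_0(x_0)=g(x_0)B_0(x_0)$-type relation holds — more cleanly, the equivalence $\gcd(\Sres_d(f,g),G_d)=1\iff G_d(x_i)\ne0$ for $0\le i\le k$ in the proof of Theorem \ref{t3} already covers all indices $0\le i\le \ell$. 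Finally the solution formula $A/B=\Sres_d(f,g)/G_d=cA_0/(cB_0)=A_0/B_0$ drops out, and the coprimality $\gcd(A_0,B_0)=1$ transfers from $\gcd(\Sres_d(f,g),G_d)=1$.

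The main obstacle is establishing the two Sylvester-sum identities in precisely the normalization needed, i.e. checking that the combinatorial sums $A_0$ and $B_0$ are proportional to $\Sres_d(f,g)$ and $G_d$ \emph{with the same constant}. For $A_0$ this is essentially the known expression of a subresultant of $f$ and $g$ as a Sylvester double sum when one argument (here $f$) is given by its roots; the subtlety is that $g$ is not given by roots but by its values $y_j=g(x_j)$ at the roots of $f$, so one must rewrite the relevant Sylvester sum by evaluating $g$ at the $x_j$ and recognizing $\prod_{x_j\notin X'}g(x_j)=\prod_{x_j\notin X'}y_j$. For $B_0$ one needs the companion identity for the cofactor $G_d$; here I would either cite the corresponding formula from \cite{DHKS09,RS11} or derive it by the same Cramer's-rule expansion of the determinant \eqref{Gd} that produced \eqref{defsub}, keeping careful track of signs via the factors $R(X',X\setminus X')$ versus $R(X\setminus X',X')$ and $R(x,X')$ versus $R(X'',x)$. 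Once the normalizations are pinned down, a sanity check at $d=\ell$ (where $\Sres_\ell(f,g)=g$ and $A_0$ should reduce to the Lagrange interpolation formula $\sum_j y_j\prod_{i\ne j}(x-x_i)/(x_j-x_i)$) confirms the constant is $1$, and the remainder of the argument is the formal deduction above.
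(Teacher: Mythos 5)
Your proposal follows essentially the same route as the paper: apply Theorem \ref{t3} and identify $\Sres_d(f,g)$ and $G_d$ with $A_0$ and $B_0$ via Sylvester's sum-in-roots formulas, evaluating $g$ at the $x_j$ to turn $\prod_{x_j\notin X'}g(x_j)$ into $\prod_{x_j\notin X'}y_j$. The only place you hedge where the paper is direct is the normalization: the paper cites Sylvester's \emph{single}-sum formulas (\cite[Arts.~21, 29]{sylv}, or \cite{KS12}) which give the \emph{exact} equalities $\Sres_d(f,g)=A_0$ and $G_d=B_0$ with no ambient constant $c$, so there is nothing to pin down; your ``sanity check at $d=\ell$'' would at best confirm $c=1$ for that one index and is not a substitute for the precise citation, although the gap closes immediately once the correct formula is invoked.
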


\begin{proof}
Let as before
$f=\prod_{i=0}^\ell(x-x_i),$ and $g$ be
the unique polynomial of degree bounded by $\ell$ which satisfies
$g(x_i)=y_i$ for $0\le i\le \ell$. Denote by $Z$ the set of roots of $g$ in $\overline{K},$ the algebraic closure of $K$.
\\
 Let $d$ be maximal such that $0\leq d\leq
a$ and $\Sres_d(f,g)\ne 0$. We apply Theorem \ref{t3} and 
Sylvester's single-sum formula   in roots for
$\Sres_d(f,g)$ (see for instance the original paper of Sylvester
\cite[Art. 21]{sylv} or the many other references on the topic) and
for
 $G_d$  (\cite[Art. 29]{sylv}, or \cite{KS12}, Remark after Lemma~6): \\
{\small $$\aligned
&\operatorname*{Sres}\nolimits_{d}(f,g)=\sum_{|X^{\prime}|=d}R(x,X^{\prime})\;\frac{R(X\setminus X^{\prime}%
,Z)}{R(X\setminus X^{\prime},X^{\prime})} \ = \
\sum_{|X^{\prime}|=d}R(x,X^{\prime})\;\frac{\prod_{x_j\notin X'}
g(x_j)}{R(X\setminus X^{\prime},X^{\prime})} \\
& \qquad \qquad \qquad
=\sum_{|X^{\prime}|=d}R(x,X^{\prime})\;\frac{\prod_{x_j\notin
X'} y_j}{R(X\setminus X^{\prime},X^{\prime})}\ = \ A_0,\\
&G_d=(-1)^{\ell-d}
  \sum_{
|X''|=\ell-d}R(x,X'')\,\frac{R(X'', Z)}{R(X'',X\setminus X'' )}\\ &
\qquad \quad = \ \sum_{ |X''|=\ell-d}R(X'', x)\,\frac{R(X'',
Z)}{R(X'',X\setminus X'')} \  = \    \sum_{
|X''|=\ell-d}R(X'',x)\frac{\prod_{x_j\in
X''}g(x_j)}{R(X'',X\setminus X'')}\\
& \qquad \quad =  \sum_{ |X''|=\ell-d}R(X'',x)\frac{\prod_{x_j\in
X''}y_j}{R(X'',X\setminus X'')}\ = \ B_0.
\endaligned
$$}
 where both  $X',\,X''\subset X$.
 The claim follows from  Theorem \ref{t3}.
\end{proof}

\begin{remark}
Observe that when $a=\ell$ then
$\Sres_\ell(f,g)=g\ne 0$ and  Theorem \ref{mainroots} specializes to
the well-known {\em Lagrange interpolation polynomial}  associated
to the data $\{(x_i, y_i)\}_{0\leq i\leq\ell}$, that is
$$\frac{A_0}{B_0}=\sum_{0\le
i\le \ell}y_i\frac{\prod_{j\ne i} (x-x_j)}{\prod_{j\ne i}
(x_i-x_j)}={\sum_{0\le i\le
\ell}y_i\frac{R(x,X\setminus\{x_i\})}{R(x_i,X\setminus\{x_i\})}}.$$
\end{remark}

\smallskip
The gap $d<a$ in Theorem \ref{t3} may  appear, as the following example shows.

\begin{example} We consider the Cauchy interpolation problem with
 $a=3, \, b=2$, and  the associated input data
\begin{align*} X&=\big( x_0,\dots,x_5\big)  \mbox{ where }
x_0,\dots, x_5   \mbox{ are the } 6 \mbox{ different roots of }  x^6-1  \mbox{ in } \overline K,\\
Y&=(y_0,\dots,y_5) \ \mbox{ with }\ y_i=x_i^5 + 2 \ \mbox{ for } \
0\le i\le 5.\end{align*} for a field $K$ of characteristic $\ne
2,3$. In this case we have
$$
f=x^6-1 \quad \mbox{and} \quad  g=x^5+2.
$$
An explicit computation shows that $\Sres_3(f,g)=\Sres_2(f,g)=0.$
However,
$$
\Sres_1(f,g)= 8+16x, \quad F_1=-8, \quad  G_1=8x.
$$
We easily verify that $G_1(x_i)\ne 0$ for $0\le i\le 5$.  Hence by
Theorem \ref{t3}, $d=1$ and
$$\frac{A}{B}=\frac{8+16x}{8x} =
\frac{1+2x}{x}
$$
is the  solution to this Cauchy interpolation problem, which can be
checked straightforwardly since
$$
\frac{1+2x_i}{x_i}=\frac{1}{x_i}+2=x_i^{5}+2=y_i,\  \ i=0,\ldots, 5.
$$
\end{example}
\section{The osculatory rational interpolation formula}
Before stating our main result for the osculatory rational interpolation problem,
we need to set a  notation.

\begin{notation}\label{not}
Set $a, b\in \N$ such that $a+b=\ell$, $a_0, \ldots, a_k\in \N$ such
that $a_0+\cdots +a_k=\ell+1$, as in Problem \eqref{uv}. We define
\begin{itemize}

\item $\ov {X}:=\big((x_0, a_0);\dots;(x_k,a_k)\big)$ an array of pairs in $ K\times \N$ and
$Y:=\big(\bfy_0,\dots,\bfy_k\big)$ where
$\bfy_i=(y_{i,0},\dots,y_{i,a_i-1})$. We call $(\overline X,Y)$  the
{\em input data} for the osculatory rational interpolation problem.
\item Set $u\in \N$. The {\em generalized Vandermonde}
or {\em confluent matrix} (e.g.  \cite{Kal1984}) of size $u+1$  associated to
$\overline X $ is the (non-necessarily square)
matrix $V_{u+1}(\overline X)\in K^{(u+1)\times(\ell+1)}$ defined by
{\small $$
V_{u+1}(\overline
X):=\begin{array}{|c|c|c|c}
\multicolumn{3}{c}{\scriptstyle{\ell+1}}\\
\cline{1-3} & & & \\ V_{u+1}(x_0,a_0) & \dots &V_{u+1}(x_k,a_k)
 &\scriptstyle u+1\\ & & & \\
 \cline{1-3}
 \multicolumn{2}{c}{}
\end{array},
$$}
where for any $t$, $V_{u+1}(x_i, t+1)\in K^{(u+1)\times (t+1)}$ is defined by
{\small $$
V_{u+1}(x_i, t+1):=\begin{array}{|ccccc|c}
\multicolumn{5}{c}{\scriptstyle{t+1}}\\
\cline{1-5}
1&0 &0&\dots & 0 &\\
x_i&1 &0&\dots & 0 &\\
x^2_i&2x_i &1&\dots & 0 &\scriptstyle u+1\\
\vdots&  \vdots& \vdots & &\vdots &\\
x_i^u&ux_i^{u-1} &{u\choose 2}x_i^{u-2}&\dots & {u\choose t}x_i^{u-t}& \\
 \cline{1-5}
 \multicolumn{2}{c}{}
\end{array}
$$}

\item We define the matrix $U_{u+1} (\overline X, Y)\in K^{(u+1)\times (\ell+1)}$ associated to $\ov {X}$  and
$Y$ as: {\small$$ U_{u+1}(\overline X, Y):=\begin{array}{|c|c|c|c}
\multicolumn{1}{c}{}&\multicolumn{1}{c}{\scriptstyle{\ell+1}}& \multicolumn{1}{c}{}& \\
\cline{1-3} & & & \\ U_{u+1}(x_0; \bfy_0) & \dots &U_{u+1}(x_k;
\bfy_k)
 &\scriptstyle u+1\\ & & & \\
 \cline{1-3}
 \multicolumn{2}{c}{}
\end{array},
$$}
where for any $t$, $U_{u+1}(x_i,\bfy_i)\in K^{(u+1)\times{(t+1)}}$
is defined by {\small $$U_{u+1}(x_i,\bfy_i)=
\begin{array}{|cccc|c}
\multicolumn{4}{c}{\scriptstyle{t+1}}\\
\cline{1-4}
y_{i,0}&y_{i,1} &\dots & y_{i,t} &\\
y_{i,0}x_i&y_{i,1}x_i+y_{i,0} &\dots & y_{i,t}x_i  +  y_{i,t-1}& \\
\vdots&  \vdots& &\vdots& \scriptstyle u+1 \\
y_{i,0}x_i^{u}&y_{i,1}x_i^{u}+uy_{i,0}x_i^{u-1} &\dots &
\sum_{j=0}^{t}{u\choose j}  y_{i,t-j} x_i^{u-j}& \\
 \cline{1-4}
 \multicolumn{2}{c}{}
\end{array}\ ,
$$}
\end{itemize}
where
$$ \big(U_{u+1}(x_i,\bfy_i)\big)_{k+1,\,l+1}= \sum_{j=0}^l{k\choose
j}y_{i,\,l-j}x_i^{k-j},$$
with the convention that when $u<j$, ${u\choose j}x_i^{u-j}=0$.
\end{notation}

The next determinantal expression presents  the solution of the
osculatory rational interpolation problem in terms of the input data
as follows:

\begin{theorem}\label{determinantal}  Under the notation above, let $d$ be maximal such that $0\leq d \leq a$ and
\begin{equation}\label{A}
A_0:=- \det \
\begin{array}{|c|c|c}
\multicolumn{1}{c}{ \scriptstyle{\ell+1}}&\multicolumn{1}{c}{ \scriptstyle{1}}\\
  \cline{1-2}&1&\scriptstyle{}\\
  V_{d+1}(\ov X)& \vdots &\scriptstyle{d+1}\\
  &x^d&\scriptstyle{}\\
\cline{1-2}\  &&\\U_{ \ell-d+1}(\ov X, Y)
 &\mathbf{0} &\scriptstyle{\ell-d+1}\\
&& \\
\cline{1-2} \multicolumn{2}{c}{}
\end{array} \in K[x]
\end{equation}
is not identically zero. Set
\begin{equation}\label{B}
B_0:=\det \
\begin{array}{|c|c|c}
\multicolumn{1}{c}{ \scriptstyle{\ell+1}}&\multicolumn{1}{c}{ \scriptstyle{1}}\\
\cline{1-2}\  &&\\V_{d+1}(\ov X)
 &\mathbf{0} &\scriptstyle{d+1}\\
&& \\
  \cline{1-2}&1&\scriptstyle{}\\
  U_{\ell-d+1}(\ov X, Y)& \vdots &\scriptstyle{\ell-d+1}\\
  &x^{\ell-d}&\scriptstyle{}\\
  \cline{1-2} \multicolumn{2}{c}{}
\end{array}\in K[x],
\end{equation}
Then $\deg(B_0)\le b$, and a solution $\dfrac{A}{B}$ for the
osculatory rational interpolation problem \eqref{uv} exists if and
only if  $B_0(x_i)\neq 0$ for  $0\le i\le  k$. In that case the
solution is given by $$\frac{A}{B}=\frac{A_0}{B_0}.$$
\end{theorem}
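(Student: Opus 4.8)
The plan is to reduce Theorem~\ref{determinantal} to Theorem~\ref{t3} by identifying the determinantal polynomials $A_0$ and $B_0$ defined in \eqref{A} and \eqref{B} with $\Sres_d(f,g)$ and $G_d$ respectively, up to a common nonzero scalar. Recall that by construction $f=\prod_{j=0}^k(x-x_j)^{a_j}$ is monic of degree $\ell+1$, and $g$ is the Hermite interpolation polynomial attached to $(\ov X,Y)$, of degree at most $\ell$; once the identification is made, the statement on $\deg(B_0)\le b$, the solvability criterion $B_0(x_i)\neq 0$, and the formula $A/B=A_0/B_0$ all follow directly from Theorem~\ref{t3} (the scalar cancels in the quotient, and the gcd condition $\gcd(\Sres_d(f,g),G_d)=1$ translates as in the proof of that theorem into the condition on the $x_i$). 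So the whole content is to show that the Sylvester-type determinants \eqref{defsub}, \eqref{Fd}, \eqref{Gd} can be rewritten, after an invertible change of basis, as the generalized Vandermonde/confluent determinants \eqref{A}, \eqref{B}.

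The key step is a linear-algebra manipulation in the Sylvester matrix. First I would interpret the rows of the $(2\ell+1-2d)\times(2\ell+1-2d)$ Sylvester-type matrix as coefficient vectors of the polynomials $x^i f(x)$ for $0\le i\le \ell-d-1$ and $x^i g(x)$ for $0\le i\le \ell-d$, all expanded in the monomial basis $1,x,\dots,x^{2\ell-d}$ (the last column being the "symbolic" column carrying $x^j f$ or $x^j g$, or the pure powers $x^j$ in the cases of $F_d$ and $G_d$). The standard fact is that $\Sres_d(f,g)$, $F_d$, $G_d$ are, up to sign, the determinants obtained by evaluating these coefficient rows against a dual functional basis; equivalently, one can apply Cramer's rule. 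The substitution I want to perform is: replace the functional "read off the coefficient of $x^m$" on the space of polynomials of degree $\le 2\ell-d$ by the $\ell+1$ functionals "evaluate the $j$-th derivative at $x_i$" (scaled by $1/j!$), $0\le i\le k$, $0\le j<a_i$, together with the remaining slots. Because $f$ vanishes to order $a_i$ at each $x_i$, every row coming from $x^i f$ is annihilated by all these Hermite functionals; this is exactly why the $V$-block appears paired with a zero block in \eqref{A} and \eqref{B}. Meanwhile, applying the Hermite functionals to the rows $x^i g$ produces the entries $\sum_{j=0}^\ell \binom{k}{j} y_{i,l-j} x_i^{k-j}$, which is precisely the definition of the matrix $U_{u+1}(\ov X,Y)$ in Notation~\ref{not}; and applying them to the pure powers $x^j$ gives the confluent matrix $V_{u+1}(\ov X)$. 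One must check that the change-of-basis matrix between the monomial-coefficient functionals and the Hermite functionals (completed to a basis of the dual of $K[x]_{\le 2\ell-d}$) is invertible with a determinant independent of the data — this is the confluent Vandermonde determinant of the $x_i$, which is nonzero since the $x_i$ are distinct — so the Sylvester determinants and the new determinants differ exactly by this fixed nonzero scalar (and the sign $-1$ in \eqref{A} is bookkeeping for row/column reordering).

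The main obstacle I expect is the careful bookkeeping of sizes, orderings of rows and columns, and signs: the Sylvester matrix \eqref{defsub} has $\ell-d$ rows of $f$-type and $\ell+1-d$ rows of $g$-type, while the target determinant \eqref{A} has a $V_{d+1}(\ov X)$ block of $d+1$ rows and a $U_{\ell-d+1}(\ov X,Y)$ block of $\ell-d+1$ rows, so the correspondence is not a naive block identification and requires transposing and permuting; getting the single sign $-1$ in \eqref{A} (versus $+1$ in \eqref{B}) correct is the fiddly part. A secondary point to verify is that the last column's symbolic entries $1,x,\dots,x^d$ (resp. $x^{\ell-d}$) match the symbolic column $x^{\ell-d-1}f,\dots,f$ (resp. $x^{\ell-d}g,\dots,g$) of \eqref{defsub} after the basis change — i.e., that the Hermite functionals send $x^j f$ to $0$ but are designed so that the undifferentiated "polynomial-valued" column transforms into the pure powers $1,\dots,x^d$; this is again a consequence of the order-$a_i$ vanishing of $f$ combined with the way the confluent block is completed. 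Once these indexing issues are settled the identities $A_0=c\,\Sres_d(f,g)$ and $B_0=c\,G_d$ hold with the same $c\in K^\times$, and the theorem follows from Theorem~\ref{t3} as explained above.
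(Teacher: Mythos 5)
Your proposal is correct and takes essentially the same approach as the paper. The paper formalizes your ``change of dual basis'' step as Lemma~\ref{lem}, realized as a matrix identity in which a Sylvester-type matrix (padded with an extra identity block $I_{d+1}$ to make the multiplication square --- the size bookkeeping you anticipate) is right-multiplied by a confluent Vandermonde block matrix, with the $\Sres_d$ identity cited from \cite[Theorem 2.5]{DKS13} and the $G_d$ identity proved analogously; the theorem then follows from Theorem~\ref{t3} exactly as you describe.
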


To prove this result  we need  the following lemma, that we prove at the end of the section.
\begin{lemma}\label{lem} Let $d\le \ell$. Then
{\small $$ \Sres_d(f,g)=(-1)^{\ell+1-d}\,\det\big(V_{\ell+1}(\ov
X)\big)^{-1}\,
 \det \
\begin{array}{|c|c|c}
\multicolumn{1}{c}{ \scriptstyle{\ell+1}}&\multicolumn{1}{c}{ \scriptstyle{1}}\\
  \cline{1-2}&1&\scriptstyle{}\\
  V_{d+1}(\ov X)& \vdots &\scriptstyle{d+1}\\
  &x^d&\scriptstyle{}\\
\cline{1-2}\  &&\\U_{ \ell-d+1}(\ov X,Y)
 &\mathbf{0} &\scriptstyle{\ell-d+1}\\
&& \\
\cline{1-2} \multicolumn{2}{c}{}
\end{array}.
$$}
and {\small $$ G_d=(-1)^{\ell - d} \det\big(V_{\ell+1}(\ov
X)\big)^{-1}
\det \
\begin{array}{|c|c|c}
\multicolumn{1}{c}{ \scriptstyle{\ell+1}}&\multicolumn{1}{c}{ \scriptstyle{1}}\\
\cline{1-2}\  &&\\V_{d+1}(\ov X)
 &\mathbf{0} &\scriptstyle{d+1}\\
&& \\
  \cline{1-2}&1&\scriptstyle{}\\
  U_{\ell-d+1}(\ov X,Y)& \vdots &\scriptstyle{\ell+1-d}\\
  &x^{\ell-d}&\scriptstyle{}\\
  \cline{1-2} \multicolumn{2}{c}{}
\end{array}.
$$}
\end{lemma}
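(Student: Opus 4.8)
\textbf{Proof plan for Lemma \ref{lem}.}
The plan is to compute the two target determinants by expanding along the last column and recognizing the cofactors as subresultant-type determinants after a change of basis from the monomial basis to the "dual" basis provided by the confluent Vandermonde matrix. First I would recall that, since $f=\prod_j(x-x_j)^{a_j}$ has $\ell+1$ distinct-with-multiplicity conditions, the matrix $V_{\ell+1}(\ov X)$ is invertible, and that for a polynomial $h=\sum h_i x^i$ of degree $\le\ell$, the vector $(h_0,\dots,h_\ell)$ is recovered from the values-and-derivatives data via $V_{\ell+1}(\ov X)$; more precisely the $k$-th row of $V_{u+1}(x_i,t+1)$ times a coefficient vector produces $\sum_j \binom{k}{j} (\text{derivative data})\,x_i^{k-j}$, which is exactly the Taylor-coefficient bookkeeping appearing in the rows of $U_{u+1}(x_i,\bfy_i)$. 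The key identity to establish is that $U_{u+1}(\ov X,Y)=V_{u+1}(\ov X)\cdot C$, where $C\in K^{(\ell+1)\times(\ell+1)}$ is the lower-triangular (in fact banded) matrix whose columns are the coordinate vectors of $x^{\,p}g(x) \bmod \langle\text{column index}\rangle$ — i.e. $C$ encodes multiplication by $g$ in the basis dual to $\ov X$. Equivalently, $U_{u+1}(\ov X,Y)$ has as its $(p+1)$-st block of columns the confluent-evaluation data of $x^p g(x)$.

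Second, with that factorization in hand I would left-multiply the big $(\ell+1)\times(\ell+1)$ block matrix appearing in the statement by $\mathrm{diag}\big(V_{d+1}(\ov X) \text{ extended}, \ \dots\big)^{-1}$; concretely, since the top block rows involve $V_{d+1}(\ov X)$ and the bottom block rows involve $U_{\ell-d+1}(\ov X,Y)=V_{\ell-d+1}(\ov X)\,C$, I factor $V_{\ell+1}(\ov X)$ (suitably truncated in each block of rows) out on the left. This turns the determinant into $\det\big(V_{\ell+1}(\ov X)\big)$ times the determinant of a matrix whose rows are now, in the monomial basis, exactly the rows of the Sylvester-type matrix defining $\Sres_d(f,g)$ in \eqref{defsub}: the top $d+1$ rows become the coefficient rows $x^p f(x)$ for $0\le p\le \ell-d-1$ together with the extra column $(1,x,\dots,x^d)^{t}$ coming from the last column, and the bottom $\ell-d+1$ rows become the coefficient rows $x^p g(x)$ for $0\le p \le \ell-d$ with a zero in the last column. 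Expanding the resulting determinant along the last column and comparing with \eqref{defsub}, \eqref{Fd}, \eqref{Gd} gives $\Sres_d(f,g)$ (resp. $G_d$) up to the sign $(-1)^{\ell+1-d}$ (resp. $(-1)^{\ell-d}$), which one reads off from counting the row/column permutations needed to match the two block layouts. The analogous computation with the extra column placed in the $U$-block instead of the $V$-block yields the formula for $G_d$.

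I expect the main obstacle to be the bookkeeping of the factorization $U_{u+1}(\ov X,Y)=V_{u+1}(\ov X)\,C$: one has to check that for each point $x_i$ with multiplicity $a_i$, the Taylor expansion identity
\[
\sum_{j=0}^{l}\binom{k}{j} y_{i,l-j}\,x_i^{k-j} \;=\; \sum_{m} \big(V_{u+1}(x_i,a_i)\big)_{k+1,\,m+1}\,\big(\text{$m$-th Taylor coefficient of } x^{\,\text{col}} g \text{ at } x_i\big)
\]
holds, i.e. that the rows of $U$ are genuinely the confluent data of the shifted polynomials $x^p g$, which reduces to the Leibniz rule $(x^p g)^{(k)} = \sum_j \binom{k}{j}(x^p)^{(j)}(g)^{(k-j)}$ evaluated at $x_i$ and re-indexed. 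Once this is pinned down, the determinant manipulations are routine multilinear algebra (pulling out $\det V_{\ell+1}(\ov X)$, permuting rows and columns, Laplace expansion along the last column) and the only remaining care is the sign, which I would track by writing both matrices in a common ordering of rows and columns and counting transpositions; the stated signs $(-1)^{\ell+1-d}$ and $(-1)^{\ell-d}$ should come out of moving the last column past the $\ell-d$ (resp. $d+1$) lower block rows and matching the Sylvester layout of \eqref{defsub}.
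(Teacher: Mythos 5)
Your key identity is correct and matches the computation the paper performs at the end of its proof: the Leibniz calculation
$(x^{u}g)^{(t)}(x_i)/t!=\sum_{j}\binom{u}{j}x_i^{u-j}y_{i,t-j}$
shows that the rows of $U_{u+1}(\ov X,Y)$ are the confluent evaluation data of $x^{p}g$, $p=0,\dots,u$, which the paper states as the equality $W_{g,\ell-d+1}(\ov X)=U_{\ell-d+1}(\ov X,Y)$. But the determinant manipulation you propose around it has a genuine gap. You cannot ``left-multiply by $\mathrm{diag}(V_{d+1}(\ov X)\text{ extended},\dots)^{-1}$'': $V_{d+1}(\ov X)$ and $V_{\ell-d+1}(\ov X)$ are $(d+1)\times(\ell+1)$ and $(\ell-d+1)\times(\ell+1)$, not square, so no such block inverse exists, and a block-diagonal left factor cannot simultaneously strip the Vandermonde data from both row blocks. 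If instead you factor $V_{\ell+1}(\ov X)$ out of the first $\ell+1$ columns on the \emph{right} (using $V_{d+1}=P_{1}V_{\ell+1}$ and $U_{\ell-d+1}=P_{2}V_{\ell+1}C=P_{2}\,\tilde C\,V_{\ell+1}$ with $\tilde C=V_{\ell+1}CV_{\ell+1}^{-1}$ the monomial-basis matrix of multiplication by $g$ modulo $f$), what survives is an $(\ell+2)\times(\ell+2)$ determinant whose rows are the standard basis vectors $e_{0},\dots,e_{d}$ and the coefficient vectors of $x^{p}g\bmod f$, $p=0,\dots,\ell-d$. This is \emph{not} ``exactly the Sylvester-type matrix defining $\Sres_{d}(f,g)$ in \eqref{defsub}'' as you assert: that matrix has a different size $(2\ell+1-2d)\times(2\ell+1-2d)$, contains rows of coefficients of $x^{p}f$ and of $x^{p}g$ (no reduction modulo $f$), and your claim that ``the top $d+1$ rows become the coefficient rows $x^{p}f$ for $0\le p\le\ell-d-1$'' is a miscount ($d+1\neq\ell-d$ in general). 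So a further, nontrivial step — essentially a Barnett-type identification of $\Sres_{d}(f,g)$ with a minor of $\tilde C$ bordered by $e_{p}$'s, or equivalently a controlled sequence of row reductions matching your matrix to \eqref{defsub}/\eqref{Gd} — is still missing.

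For contrast, the paper sidesteps this by going the other direction and in a larger ambient size: it cites \cite[Theorem 2.5]{DKS13} for the $\Sres_{d}$ identity, and for $G_{d}$ it starts from a $(2\ell-d+2)\times(2\ell-d+2)$ Sylvester-style matrix $U_{d}$ (with $\det U_{d}=G_{d}$), right-multiplies by a square matrix built from $V_{2\ell-d+1}(\ov X)$ and an identity block, and reads off a block-triangular product whose nontrivial block is the Wronskian/Vandermonde matrix of the lemma; because $M_{f}'$ is unitriangular (since $f_{\ell+1}=1$), the extraneous block contributes $1$, and the factor $\det V_{\ell+1}(\ov X)$ and the sign come out of the block count. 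Your plan would be a more uniform, self-contained proof of both formulas if completed, but as written it stops precisely at the step that carries the real content.
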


\begin{proof}[Proof of Theorem \ref{determinantal}]
The maximal index $d\le a$ such that $A_0\ne 0$ clearly coincides
with the maximal index $d\le a$ such that $\Sres_d(f,g)\ne 0$, since
these two quantities only differ by a non-zero constant.
Analogously, $G_d(x_i)\ne 0 \Leftrightarrow B_0(x_i)\ne 0$. Finally,
$\dfrac{A_0}{B_0}=\dfrac{\Sres_d(f,g)}{G_d}$.
 \end{proof}

 \begin{remark}\label{LB} As we checked after a useful discussion
with George Labahn at the 2013 SIAM Conference on Applied Algebraic Geometry, the
matrix formulations for $A_0$ and $B_0$ in Theorem \ref{determinantal} 
can actually be
derived  from the  {\em Mahler systems} introduced by Beckermann and
Labahn in \cite{BL00} to solve a more general class of problems.
Indeed, by translating our situation into their general framework
(see \cite[Example 2.3]{BL00}), and using the standard Hermite dual
basis to produce their matrices, it can be seen that the
determinants appearing in the right hand side of \eqref{A} and
\eqref{B} coincide with those defining
$p^{(\ell)}(\stackrel{\rightarrow}{n},z)$ in \cite[Section 5]{BL00}.
\end{remark}

 \begin{remark} Let us note that in particular,  Problem \eqref{uv} for  $a=\ell$
corresponds to the ordinary {\em Hermite interpolation problem},
i.e. the determination of the {Hermite interpolation polynomial} $g$
associated to the input data $$\overline X=\big((x_0,a_0),\ldots,
(x_k, a_k)\big), Y=(\bfy_0,\dots,\bfy_k)\ \mbox{ where} \
\bfy_i=(y_{i,j})_{0\le j<a_i},$$ which is the  unique polynomial of
degree less than or equal to $\ell$ such that
 $$g^{(j)}(x_i)=j!\, y_{i,j},\, 0\le i\le  k,\ 0\le j<a_i.$$
 In this case,
 Theorem \ref{determinantal} specializes to the well-known
determinantal expression for the polynomial $g$, that is
\begin{equation*}\label{hhermite}
g=-\, \frac{ \det \
\begin{array}{|c|c|}
  \cline{1-2}
  & 1\\
  V_{\ell+1}(\ov X)& \vdots \\
  &x^\ell\\
\cline{1-2}\  Y
 &{0} \\
\cline{1-2}
\end{array} {\ } }{\det\big(V_{\ell+1}(\ov X)\big) {\ } }
,
\end{equation*}
setting in this way a unified determinantal framework for polynomial
and rational interpolation problems.
\end{remark}

\begin{remark}
We  remark that for the Cauchy interpolation problem
\eqref{uu}, the solution described by Theorem~\ref{determinantal}
gives { \small {{$$\frac{A}{B}\,=\,-\
\frac{\det\begin{array}{|ccc|c|l}\multicolumn{1}{c}{}&
\multicolumn{1}{c}{\scriptstyle{\ell+1}}&\multicolumn{1}{c}{}&\multicolumn{1}{c}{
\scriptstyle{1}}&
\\\cline{1-4}
1&\cdots&1&1&\\
\vdots& &\vdots&\vdots&{\scriptstyle{d+1}}\\
x_0^d&\cdots&x_\ell^d&x^{d}&\\
\cline{1-4}  y_{0}&\ldots&y_{\ell}&0&{\scriptstyle{ }}\\
\vdots& &\vdots&\vdots&{\scriptstyle{\ell-d+1}}\\
 y_0 x_0^{\ell-d}&\dots &y_\ell x_\ell^{\ell-d} & 0                   \\ \cline{1-4}  \multicolumn{2}{c}{}
                     \end{array}}{\det\begin{array}{|ccc|c|l}\multicolumn{1}{c}{}&
\multicolumn{1}{c}{\scriptstyle{\ell+1}}&\multicolumn{1}{c}{}&\multicolumn{1}{c}{
\scriptstyle{1}}&
\\\cline{1-4}
1&\cdots&1&0&\\
\vdots& &\vdots&\vdots&{\scriptstyle{d+1}}\\
x_0^d&\cdots&x_\ell^d&0&\\
\cline{1-4}  y_{0}&\ldots&y_{\ell}&1&{\scriptstyle{ }}\\
\vdots& &\vdots&\vdots&{\scriptstyle{\ell-d+1}}\\
 y_0 x_0^{\ell-d}&\dots &y_\ell x_\ell^{\ell-d}& x^{\ell-d}                  \\ \cline{1-4}  \multicolumn{2}{c}{}
                     \end{array}} \ .$$}}}

\end{remark}

\smallskip
\begin{example}
We consider the  osculatory rational interpolation problem  with
$a=b=2$, $k=2$, and the associated input data
\begin{align*}\overline X&=\big( (x_0,a_0),(x_1,a_1)\big) \ \mbox{ with }\
(x_0,a_0)=(1,2), \, (x_1,a_1)= (2,3)\\
Y&=(\bfy_0,\bfy_1) \ \mbox{ with }\
\bfy_0=(y_{0,0},y_{0,1})=(2,3),\,
\bfy_1=(y_{1,0},y_{1,1},y_{1,2})=(6,7,8).\end{align*} We have
$$
f=(x-1)^2(x-2)^3 \quad \mbox{and} \quad  g= -8 + 23 x - 20 x^2 + 8
x^3 - x^4.$$ By explicit computation, we get
$$
\Sres_2(f,g)=35 x - 25 x^2, \quad F_2=-25 + 5 x ,\quad  G_2=25 - 25
x + 5 x^2.
$$
We easily verify that  $G_2(x_i)\ne 0$ for $i=1,2$ if $5\ne 0$ in
$K$. Hence by Theorem \ref{t3}, $d=a=2,$ and
$$\frac{A}{B}=\frac{35x -  25x^2}{25 -  25x + 5x^2} =
\frac{7x-5x^2}{5-5x+x^2}
$$
is the  solution to the rational interpolation problem, which can be
checked straightforwardly.
\end{example}

\smallskip
\begin{proof} [Proof of Lemma \ref{lem}]  By   \cite[Theorem 2.5]{DKS13}, 
{\small $$ \Sres_d(f,g)=(-1)^{\ell+1-d}\,\det\big(V_{\ell+1}(\ov
X)\big)^{-1}\,
 \det \
\begin{array}{|c|c|c}
\multicolumn{1}{c}{ \scriptstyle{\ell+1}}&\multicolumn{1}{c}{ \scriptstyle{1}}\\
  \cline{1-2}&1&\scriptstyle{}\\
  V_{d+1}(\ov X)& \vdots &\scriptstyle{d+1}\\
  &x^d&\scriptstyle{}\\
\cline{1-2}\  &&\\W_{g, \ell-d+1}(\ov X)
 &\mathbf{0} &\scriptstyle{\ell-d+1}\\
&& \\
\cline{1-2} \multicolumn{2}{c}{}
\end{array},
$$}
where $W_{g,\ell-d+1}(\ov X)$ is   the {\em generalized Wronskian}
of size $\ell-d+1$ associated to $\overline X $, i.e. the matrix
{\small
$$
W_{g,\ell-d+1}(\overline X):=\begin{array}{|c|c|c|c}
\multicolumn{1}{c}{}&\multicolumn{1}{c}{\scriptstyle{\ell+1}}& \multicolumn{1}{c}{}& \\
\cline{1-3} & & & \\ W_{g,\ell-d+1}(x_0,a_0) & \dots
&W_{g,\ell-d+1}(x_k, a_k)
 &\scriptstyle \ell-d+1\\ & & & \\
 \cline{1-3}
 \multicolumn{2}{c}{}
\end{array} \ \in \ K^{(\ell-d+1)\times (\ell+1)}
$$}
where  {\small $$ W_{g, \ell-d+1}(x_i,a_i):=\begin{array}{|cccc|c}
\multicolumn{4}{c}{\scriptstyle{a_i}}\\
\cline{1-4}
g(x_i)&g'(x_i) &\dots & \frac{g^{(a_i-1)}(x_i)}{(a_i-1)!} &\\
(xg)(x_i)&(zg)'(x_i) &\dots & \frac{(xg)^{(a_i-1)}(x_i)}{(a_i-1)!} & \scriptstyle \ell-d+1\\
\vdots&  \vdots& &\vdots& \\
(x^{\ell-d}g)(x_i)&(x^{\ell-d}g)'(x_i) &\dots & \frac{(x^{\ell-d}g)^{(a_i-1)}(x_i)}{(a_i-1)!} & \\
 \cline{1-4}
 \multicolumn{2}{c}{}
\end{array}\  \in \ K^{(\ell-d+1)\times a_i}
$$}
(with the convention that when $k<j$, ${k\choose j}x_i^{k-j}=0$).
\\
In the same way, we prove now that {\small $$ G_d=(-1)^{\ell - d}
\det\big(V_{\ell+1}(\ov X)\big)^{-1}
\det \
\begin{array}{|c|c|c}
\multicolumn{1}{c}{ \scriptstyle{\ell+1}}&\multicolumn{1}{c}{ \scriptstyle{1}}\\
\cline{1-2}\  &&\\V_{d+1}(\ov X)
 &\mathbf{0} &\scriptstyle{d+1}\\
&& \\
  \cline{1-2}&1&\scriptstyle{}\\
  W_{g,\ell-d+1}(\ov X)& \vdots &\scriptstyle{\ell+1-d}\\
  &x^{\ell-d}&\scriptstyle{}\\
  \cline{1-2} \multicolumn{2}{c}{}
\end{array}.
$$}
For that, we  consider the following matrices:
 {\small
\[\begin{array}{cc}
M_{ f}:=
\begin{array}{|ccccc|c|c}
\multicolumn{6}{c}{\scriptstyle{2\ell-d+2}}\\
\cline{1-6}
f_{0} & \dots & f_{\ell+1} &  & &0&\\
& \ddots &  & \ddots & &\vdots&\scriptstyle{\ell-d}\\
&  & f_{0} & \dots & f_{\ell+1} &0&\\
\cline{1-6} \multicolumn{2}{c}{}
\end{array},
\\  M_{g}:=
\begin{array}{|ccccc|c|c}
\multicolumn{6}{c}{\scriptstyle{2\ell-d+2}}\\
\cline{1-6}
g_{0} & \dots & g_{\ell} &  & &x^{0}&\\
& \ddots &  & \ddots & &\vdots &\scriptstyle{\ell+1-d}\\
&  & g_{0} & \dots & g_{\ell} &x^{\ell-d}& \\
\cline{1-6} \multicolumn{2}{c}{}
\end{array}
\end{array}
\]}
and {\small $$U_{d}:=%
\begin{array}{|c|c}
\multicolumn{1}{c}{ \scriptstyle{2\ell-d+2}}&\\
\cline{1-1}
\ I_{d+1}\ \ &\scriptstyle{d+1}\\
\cline{1-1}
M_{ f}&\scriptstyle{\ell-d}\\
\cline{1-1}
M_{ g}&\scriptstyle{\ell+1-d}\\
\cline{1-1} \multicolumn{2}{c}{}
\end{array} ,$$}
where $I_{d+1}$ is the $(d+1)\times(2\ell-d+2)$ matrix with the
identity matrix on the left and zero otherwise. Then from the
definition of $G_d$ we have that
$$
G_d= \det (U_d).
$$
Also, similarly as in the proof of  \cite[Theorem 2.5]{DKS13}, we have {\small $$
\begin{array}{c|c|}
\multicolumn{1}{c}{}&
\multicolumn{1}{c}{\scriptstyle{2\ell-d+2}}\\
\cline{2-2}
\scriptstyle{d+1}& \ \ I_{d+1}\ \ \\
\cline{2-2}
\scriptstyle{\ell-d}& M_{ f}\\
\cline{2-2} \scriptstyle{\ell+1-d}&
M_{ g}\\
\cline{2-2} \multicolumn{2}{c}{}
\end{array}
\,
\begin{array}{|c|c|c}
\multicolumn{1}{c}{ \scriptstyle{\ell+1}}&\multicolumn{1}{c}{ \scriptstyle{\ell-d+1}}&\\
\cline{1-2} & \mathbf{0} &\scriptstyle{\ell+1}\\
V_{2\ell-d+1}(\ov X )&\\
\cline{2-2}&&\\
\cline{1-1}
0&\Id_{\ell-d+1} &\scriptstyle{\ell-d+1}\\
\cline{1-2} \multicolumn{3}{c}{}
\end{array}\   =    \  \begin{array}{|c|c|c|c}
\multicolumn{1}{c}{ \scriptstyle{\ell+1}}&\multicolumn{1}{c}
{ \scriptstyle{\ell-d}}&\multicolumn{1}{c}{ \scriptstyle{1}}&\\
\cline{1-3}&&&\\
  V_{d+1}(\ov X)& * &0&\scriptstyle{d+1}\\
  &&&\\
\cline{1-3}
&&&\\
 \mathbf{0}&M'_{ f}&0&\scriptstyle{\ell-d}\\
 &&&\\
\cline{1-3}
&&1&\\
\ W_{ g,\ell+1-d}(\ov X)&* &\vdots&\scriptstyle{\ell+1-d}\\
&&x^{\ell-d}\\
\cline{1-3} \multicolumn{3}{c}{}
\end{array},
$$}
where $M'_f$ is a triangular matrix with $f_{\ell+1}=1$ in its
diagonal. This shows the formula for $G_d$.
\\

\noindent Finally, we simply show that $W_{g,\ell-d+1}(\overline
X)=U_{\ell-d+1}(\overline X,Y)$ by computing the entries of
$W_{g,\ell-d+1}(\overline X)$: we apply    Leibniz rule and the fact
that $g^{(t-j)}(x_i)=(t-j)!\, y_{i,j}$ for $0\le i\le k$ and $0\le
j<a_i$:
$$
 \frac{(x^{u}g)^{(t)}(x_i)}{t!}= \sum_{j=0}^{t}{u \choose j}  x_i^{u-j}y_{i,t-j}.
 $$
\end{proof}

\smallskip

\begin{ac}
We are grateful to Alain Lascoux for having explained us part of the
results in \cite{las}. In December 2012, a preliminary version of
this paper (\cite{DKS12}) was posted in the arxiv, and its results
were further communicated in both the MEGA 2013 conference and the
2013 SIAM Algebraic Geometry Meeting, where we received a lot of
comments and suggestions for improvements. In particular, we are
grateful to George Labahn for having discussed with us his previous
results on the topic, \cite{BL00}, as commented in Remark \ref{LB}. We
also thank   Bernard Mourrain for very helpful suggestions for
future projects, and the referees for helping us improving the presentation of the results. All the examples and computations have been worked
out with the aid of the software {\tt Mathematica 8.0}
(\cite{math}).
\end{ac}

\bigskip
\bibliographystyle{alpha}
\def\cprime{$'$} \def\cprime{$'$} \def\cprime{$'$}

\end{document}